\DeclareFontFamily{U}{wncy}{}
    \DeclareFontShape{U}{wncy}{m}{n}{<->wncyr10}{}
    \DeclareSymbolFont{mcy}{U}{wncy}{m}{n}
    \DeclareMathSymbol{\Sh}{\mathord}{mcy}{"58}
\newcommand{\bb}{\mathbb}
\newcommand{\mbf}{\mathbf}
\newcommand{\scr}{\mathscr}
\newcommand{\mrm}{\mathrm}
\theoremstyle{definition}
\newtheorem{theorem}{Theorem}[section]
\newtheorem{lemma}[theorem]{Lemma}
\newtheorem{proposition}[theorem]{Proposition}
\newtheorem{corollary}[theorem]{Corollary}
\newtheorem{definition}[theorem]{Definition}
\theoremstyle{remark}
\newtheorem{remark}{Remark}
\newtheorem{example}[theorem]{Example}
\begin{document}

\title{Growth of the analytic rank of modular elliptic curves over quintic extensions}

\author{Michele Fornea}
\email{michele.fornea@mail.mcgill.ca}
\address{McGill University, Montreal, Canada.}
\date{February 10, 2018}
\classification{11F41, 11F80, 11G05.} 
\keywords{Artin representations, Galois embedding problem, twisted triple product $L$-functions.}

\begin{abstract}
Given $F$ a totally real field and $E_{/F}$ a modular elliptic curve, we denote by $G_5(E_{/F};X)$ the number of quintic extensions $K$ of $F$ such that the norm of the relative discriminant is at most $X$ and the analytic rank of $E$ grows over $K$, i.e., $r_\mrm{an}(E/K)>r_\mrm{an}(E/F)$. We show that $G_5(E_{/F};X)\asymp_{+\infty} X$ when the elliptic curve $E_{/F}$ has odd conductor  and at least one prime of multiplicative reduction. As Bhargava, Shankar and Wang \cite{BSW} showed that the number of quintic extensions of $F$ with norm of the relative discriminant at most $X$ is asymptotic to $c_{5,F} X$ for some positive constant $c_{5,F}$, our result exposes the growth of the analytic rank as a very common circumstance over quintic extensions.
\end{abstract}

\maketitle

\vspace*{6pt}\tableofcontents  

\section{Introduction}
The arithmetic of elliptic curves is an intriguing mystery for number theorists. Given an elliptic curve $E$ over a number field $K$, it is possible to package its arithmetic information into a generating series $L(E/K,s)$. While a priori the series just converges for $\mrm{Re}(s)\gg0$, conjecturally it has analytic continuation to the whole complex plane and a functional equation $s\mapsto 2-s$ with center $s=1$. The analytic rank is defined as the conjectural order of vanishing at the center $r_\mrm{an}(E/K)=\mrm{ord}_{s=1}L(E/K,s)$. This analytic invariant has an algebraic doppelg{\" a}nger: the $\bb{Z}$-rank of the finitely generated abelian group of $K$-rational points, $r_\mrm{alg}(E/K)$, which is called the algebraic rank of $E_{/K}$. 
The BSD-conjecture, inspired by the pioneering work of Birch and Swinnerton-Dyer, claims the equality of the two invariants. Therefore, when the elliptic curve $E$ is defined over a totally real field $F$ and $K/F$ is a finite extension, we expect the inequality $r_\mrm{an}(E/K)\ge r_\mrm{an}(E/F)$ to hold. Furthermore, the strict inequality should be explained by the presence of a non-torsion point in $E(K)$ linearly independent from $E(F)$. We like to think about our main result as evidence for the fact that there should be a systematic way to produce non-torsion points over $S_5$-quintic extensions of totally real fields, in analogy with the case of Heegner points over CM fields.

Our main result is compatible with the conjectures in \cite{CFK}, \cite{CFK2} about the growth of the analytic rank of rational elliptic curves over cyclic quintic extensions. In those works growth is predicted to be a rare phenomenon, however, cyclic quintic extensions form a thin subset of all quintic extensions: the counting function of cyclic quintic fields is asymptotic to $\alpha X^{1/4}$ for some positive constant $\alpha>0$ \cite{WAbelian}. Finally, we would like to remark that all elliptic curves over a totally real field $F$ with $[F:\bb{Q}]\le2$ are modular and that, in general, all but finitely many $\overline{\bb{Q}}$-isomorphism classes of elliptic curves over a totally real field $F$ are known to be modular (\cite{W}, \cite{TW}, \cite{BCDT}, \cite{EllipticReal}) making our result widely applicable.

\subsubsection{Strategy of the proof.}
Let $F$ be a totally real field, $K/F$ an $S_5$-quintic extension with a totally complex Galois closure $J$ such that the subfield of $J$ fixed by $A_5$ is a totally real quadratic extension $M/F$.
For $E_{/F}$ a modular elliptic curve corresponding to a primitive Hilbert cuspform $f_E$ of parallel weight two, the key idea of the paper is to interpret the ratio of $L$-functions $L(E/K, s)/L(E/F, s)$ as the twisted triple product $L$-function attached to $f_E$ and a certain Hilbert cuspform $g$ over $M$ of parallel weight one. Then, the sign $\varepsilon_{K/F}$ of the functional equation of $L(E/K, s)/L(E/F, s)$ is determined by the splitting behaviour in $K$ of the primes of multiplicative reduction of $E_{/F}$, and we can show there is a positive proportion of quintic extensions $K/F$ for which $\varepsilon_{K/F} = -1$ by invoking  \cite{BSW}.

The twisted triple product $L$-function, attached to a modular elliptic curve $E_{/F}$ and a cuspform $g$ of parallel weight one over a totally real quadratic extension $M/F$, is the $L$-function $L(E,\otimes\mbox{-}\mrm{Ind}_M^F(\varrho_g),s)$. Here, $\otimes\mbox{-}\mrm{Ind}_M^F(\varrho_g)$ denotes the tensor induction of the Artin representation attached to $g$. The main technical result of our work consists in proving the existence of an eigenform $g$ such that $\otimes\mbox{-}\mrm{Ind}_M^F(\varrho_g)=\mrm{Ind}_K^F\bb{I}-\bb{I}$. Thanks to the modularity of totally odd Artin representations \cite{PS}, the problem reduces to finding the solution of a Galois embedding problem as follows.
The group $G(J/M)\cong A_5$ does not afford any irreducible $2$-dimensional representation, but it has two conjugacy classes of embeddings into $\mrm{PGL}_2(\bb{C})$. Therefore, we look for a lift of the $2$-dimensional projective representation of $G_M\twoheadrightarrow G(J/M)\hookrightarrow \mrm{PGL}_2(\bb{C})$ which $(i)$ is totally odd, $(ii)$ has controlled ramification, and $(iii)$ whose tensor induction is $\mrm{Ind}_K^F\bb{I}-\bb{I}$.
Note that every projective 2-dimensional representation has a minimal lift with index a power of $2$ (Lemma 1.1, \cite{Quer}), thus we are led to consider the following Galois embedding problem:
\begin{quote}
 Given a finite set of primes $\Sigma_0$, is it possible to find a Galois extension $H/F$ unramified at $\Sigma_0$, containing $J/F$ and such that, 
$1\to\scr{C}_{2^r}\to G(H/F)\to G(J/F)\to 1$ is a non-split extension for some $r\ge1$?
\end{quote}
Here $\scr{C}_{2^r}$ denotes the cyclic group of order $2^r$  considered as an $S_5$-module via the homomorphism $
S_5\twoheadrightarrow \{\pm1\}\hookrightarrow\text{Aut}(\scr{C}_{2^r})$,
taking the non-trivial element of $\{\pm1\}$ to the automorphism $x\mapsto x^{-1}$. In Theorem $\ref{ginger}$, we are able to provide conditions for the Galois embedding problem to have a solution.


\begin{acknowledgements}
This work would not have been possible without the constant guidance and support of my Ph.D. advisors Henri Darmon and Adrian Iovita. 	
I would like to thank Jan Vonk for pointing out the relevance of the author's work for statistical questions about elliptic curves, and I am grateful to Chantal David and Hershy Kisilevsky for useful comments on a first draft of the paper. Finally, I would like to thank the anonymous referee for the valuable feedback on the paper.

\end{acknowledgements}

\section{On exotic tensor inductions}

Let $A$, $B$ be groups, $n\in\mathbb{N}$ and $\phi:A\to S_n$ a group homomorphism. The wreath product of $B$ with $A$ is $B\wr A= B^{\oplus n}\rtimes_\phi A$, where $A$ acts permuting the factors through $\phi$. 

Let $G$ be a group and $Q$ a subgroup of index $n$. Denote by $\pi:G\to S_n$ the action of $G$ on right cosets by right multiplication and let $\{g_1,\dots, g_n\}$ be a set of coset representatives. For any $g\in G$ and $i\in\{1,\dots, n\}$, we denote by $q_i(g)$ the unique element of $Q$ such that $g\cdot g_i=g_{i\pi(g)}\cdot q_i(g)$.
The map $\varphi: G\to Q\wr S_n$, given by $g\mapsto (q_1(g),\dots, q_n(g), \pi(g))$, is an injective group homomorphism. Moreover, a different choice of coset representatives produces a homomorphism conjugated to $\varphi$ by an element of $G$. 

\begin{definition}
Let $Q$ be a subgroup of $G$ of index $n$, $\varrho:Q\to$ $\mrm{Aut}(V)$ a representation of $Q$. We define the tensor induction $\otimes\mbox{-}\mrm{Ind}_Q^G(\varrho)$ as the composition of the arrows in the following diagram
\[\xymatrix{
G\ar[d]_\varphi\ar@{.>}[drrrr]^{\otimes\mbox{-}\mrm{Ind}_Q^G(\varrho)} & & & &\\
Q\wr S_n\ar[rr]_{(\varrho,\mrm{id}_{S_n})}&& \text{Aut}(V)\wr S_n\ar[rr]_{(\alpha,\psi)}&& \text{Aut}(V^{\otimes n}),
}\]
where $\alpha:\text{Aut}(V)^{\oplus n}\to \text{Aut}(V^{\otimes n})$ is given by
$\alpha(f_1,\dots,f_n)= f_1\otimes\dots\otimes f_n$, and $\psi:S_n\to \hbox{Aut}(V^{\otimes n})$ by $\sigma\mapsto [\psi(\sigma): v_1\otimes\dots\otimes v_n\mapsto v_{1\sigma}\otimes\dots\otimes v_{n\sigma}]$.
\end{definition}

\begin{example}
Suppose $Q$ is a subgroup of of $G$ index $2$ and let $\{1,\theta\}$ be representatives for the right cosets, then
\[\begin{array}{lll}
q_1(g)=g, & q_2(g)=\theta g\theta^{-1} & \text{if}\ g\in Q\\
q_1(g)=g\theta^{-1}, &   q_2(g)=\theta g & \text{if}\ g\in G\setminus Q.
\end{array}\]
Thus, 
\[
\otimes\mbox{-}\mrm{Ind}_Q^G(\varrho)(g)=
\begin{cases}
\varrho(g)\otimes\rho(\theta g\theta^{-1})& g\in Q\\
[\varrho(g\theta^{-1})\otimes\varrho(\theta g)]\circ\psi(12)& g\in G\setminus Q.
\end{cases}
\]
\end{example}

\begin{proposition}\label{tre}
Let $Q$ be a subgroup of index $2$ of $G$ and $\{1,\theta\}$ be representatives for the right cosets. If $(V,\varrho)$ is an irreducible $2$-dimensional representation of $Q$ with projective image isomorphic to either $A_4, S_4$ or $A_5$, then the tensor induction $(V^{\otimes G},\otimes\mbox{-}\mrm{Ind}_Q^G(\varrho))$ is reducible if and only if $V^*(\lambda)\cong V^\theta$ for some character $\lambda:Q\to\bb{C}^\times$. When that happens the decomposition type is $(3,1)$.
\end{proposition}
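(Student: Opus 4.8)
The plan is to study $T:=\otimes\mbox{-}\mrm{Ind}_Q^G(\varrho)$ through its restriction to $Q$, which by the Example is $\varrho\otimes\varrho^\theta$ with $\varrho^\theta(q):=\varrho(\theta q\theta^{-1})$. Note that $\varrho^\theta$ has literally the same image in $\mrm{GL}_2(\bb{C})$ as $\varrho$, hence the same exotic projective image. I would begin by reformulating the criterion: for a $2$-dimensional $V$ one has $V^*\cong V\otimes(\det\varrho)^{-1}$, so the projectivizations satisfy $\bar\varrho^*\cong\bar\varrho$; since two linear lifts of one projective representation differ by a character, the three conditions ``$V^*(\lambda)\cong V^\theta$ for some $\lambda$'', ``$V(\mu)\cong V^\theta$ for some $\mu$'', and ``$\bar\varrho\cong\bar\varrho^\theta$'' are all equivalent.

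Next I would compute $\langle T|_Q, T|_Q\rangle_Q$. Since $|\chi_\varrho|^2$ and $|\chi_{\varrho^\theta}|^2$ are the characters of the self-dual representations $\mrm{End}(V)$ and $\mrm{End}(V^\theta)$, a short manipulation gives $\langle T|_Q, T|_Q\rangle_Q=\langle\mrm{End}(V),\mrm{End}(V^\theta)\rangle_Q=1+\langle\mrm{ad}^0\varrho,\mrm{ad}^0\varrho^\theta\rangle_Q$, where $\mrm{End}(V)=\bb{C}\oplus\mrm{ad}^0(\varrho)$ and $\mrm{ad}^0(\varrho)$ is the $3$-dimensional trace-zero part. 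This is where the hypothesis on the projective image is essential: $\mrm{ad}^0\varrho$ factors through $\bar\varrho(Q)\in\{A_4,S_4,A_5\}$, on which it is faithful and (as one checks on character tables) irreducible, and likewise for $\mrm{ad}^0\varrho^\theta$; hence $\langle\mrm{ad}^0\varrho,\mrm{ad}^0\varrho^\theta\rangle_Q\in\{0,1\}$. Moreover $\mrm{ad}^0$ identifies $\mrm{PGL}_2(\bb{C})$ with $\mrm{SO}_3(\bb{C})$, so an irreducible $\mrm{ad}^0\varrho$ carries a unique invariant quadratic form; therefore an abstract isomorphism $\mrm{ad}^0\varrho\cong\mrm{ad}^0\varrho^\theta$ is realized inside $\mrm{GO}_3(\bb{C})=\bb{C}^\times\cdot\mrm{O}_3(\bb{C})$, and (as $-I$ is central in $\mrm{O}_3$) in fact inside $\mrm{SO}_3(\bb{C})=\mrm{PGL}_2(\bb{C})$, so it forces $\bar\varrho\cong\bar\varrho^\theta$. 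Thus $\langle T|_Q, T|_Q\rangle_Q$ is $1$ when $\bar\varrho\not\cong\bar\varrho^\theta$ and $2$ otherwise. In the first case $T|_Q$ is $Q$-irreducible, hence $T$ is $G$-irreducible --- which is exactly the ``only if'' direction of the Proposition.

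For the ``if'' direction, assume $\varrho^\theta\cong\varrho\otimes\mu$. Then $T|_Q\cong\varrho^{\otimes2}\otimes\mu\cong(\det\varrho\otimes\mu)\oplus(\mrm{Sym}^2\varrho\otimes\mu)$; the second summand is $3$-dimensional irreducible, as $\mrm{Sym}^2\varrho\cong\mrm{ad}^0\varrho\otimes\det\varrho$. Write these two constituents as $W_1$ and $W_3$; they are non-isomorphic. By Clifford theory for the index-$2$ subgroup $Q$, a $4$-dimensional irreducible $G$-representation restricts either to a $4$-dimensional $Q$-irreducible or to $U\oplus U^\theta$ with $\dim U=2$; since $T|_Q=W_1\oplus W_3$ is of neither shape, $T$ is reducible. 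Decomposing $T$ into $G$-irreducibles, the unique summand whose restriction meets $W_3$ has dimension $\ge3$, hence exactly $3$ (a $4$-dimensional summand would be all of $T$) and restricts to $W_3$; the complement has dimension $1$. So $T$ has decomposition type $(3,1)$, as claimed.

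The main obstacle is the passage, in the second paragraph, from an abstract $\mrm{GL}_3$-isomorphism $\mrm{ad}^0\varrho\cong\mrm{ad}^0\varrho^\theta$ to the projective equivalence $\bar\varrho\cong\bar\varrho^\theta$: this is precisely where both the exotic nature of the projective image (giving irreducibility of $\mrm{ad}^0$, hence uniqueness of the invariant form) and the orthogonal structure of the adjoint representation are brought to bear. Everything else reduces to the elementary representation theory of $2$-dimensional representations and standard Clifford theory for an index-$2$ subgroup.
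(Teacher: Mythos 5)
Your proof is correct, and while the ``if'' direction runs along the same lines as the paper's (decompose $V\otimes V^\theta$ into a character plus the irreducible $3$-dimensional adjoint piece, then propagate the $(3,1)$ type from $Q$ up to $G$), your ``only if'' direction takes a genuinely different route. The paper argues by cases on the decomposition type of the $4$-dimensional representation: if a $1$-dimensional constituent occurs, a Frobenius-reciprocity/invariant-vector computation yields $V^*(\chi_{|Q})\cong V^\theta$ directly; the remaining possibility, type $(2,2)$, is excluded by citing Lemmas 2.1 and 2.2 of \cite{DLR} (one of the two $2$-dimensional constituents would have to restrict to a sum of characters of $Q$, contradicting irreducibility of $\mrm{Ad}^0(V)$). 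You instead compute $\dim\mrm{End}_Q\big(V\otimes V^\theta\big)=1+\big\langle \mrm{ad}^0\varrho,\mrm{ad}^0\varrho^\theta\big\rangle$ and reduce everything to the implication $\mrm{ad}^0\varrho\cong\mrm{ad}^0\varrho^\theta\Rightarrow\bar\varrho\cong\bar\varrho^\theta$, which you establish via the identification $\mrm{PGL}_2(\bb{C})\cong\mrm{SO}_3(\bb{C})$ and the uniqueness of the invariant quadratic form on an irreducible orthogonal representation. This makes the argument self-contained (no appeal to \cite{DLR}) and bypasses the $(2,2)$ case analysis entirely, at the cost of the extra orthogonal-group lemma; the paper's version is shorter modulo the cited lemmas. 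One small housekeeping remark: the character-sum computation of $\langle T_{|Q},T_{|Q}\rangle_Q$ presupposes that one works on a finite quotient (or, equivalently, should be phrased as $\dim\mrm{Hom}_Q(\mrm{End}(V),\mrm{End}(V^\theta))$ using self-duality of $\mrm{End}(V)$), and semisimplicity of $V^{\otimes G}$ is used implicitly in both your argument and the paper's; in the intended application $\varrho$ has finite image, so this is harmless.
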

\begin{proof}
If $V^*(\lambda)\cong V^\theta$ then the tensor product factors as $V\otimes V^\theta\cong\text{Ad}^0(V)(\lambda)\oplus\bb{C}(\lambda)$,
where $\text{Ad}^0(V)$ is irreducible (Lemma 2.1, \cite{DLR}). By Frobenius reciprocity,
$\mrm{Hom}_G(V^{\otimes G},\mrm{Ind}_Q^G(\lambda))=\mrm{Hom}_Q(V\otimes V^\theta, \bb{C}(\lambda))\not=0$, hence $V^{\otimes G}$ is reducible. Since $\big(V^{\otimes G}\big)_{\lvert Q}=V\otimes V^\theta$ has decomposition type $(3,1)$, so does $V^{\otimes G}$.

Suppose $V^{\otimes G}$ is reducible. If $V^{\otimes G}$ contains a $1$-dimensional subrepresentation then $V^*(\lambda)\cong V^\theta$. Indeed, if $\bb{C}(\chi)$ is a subrepresentation of $V^{\otimes G}$ then
$0\not=\mrm{Hom}_G(V^{\otimes G}, \bb{C}(\chi))\hookrightarrow \mrm{Hom}_Q(V\otimes V^\theta, \bb{C}(\chi_{\lvert Q}))$. Therefore, the tensor product $V\otimes V^\theta(\chi^{-1}_{\lvert Q})$ has a non-zero $Q$-invariant vector, i.e. $0\not=\mrm{H}^0(Q,V\otimes V^\theta(\chi^{-1}_{\lvert Q}))=\mrm{Hom}_Q(V^*(\chi_{\lvert Q}), V^\theta)$, which implies that  $V^*( \chi_{\lvert Q})\cong V^\theta$ given the irreducibility of $V$. Then we can apply the previous step to conclude. Finally, if $V^{\otimes G}$ has decomposition type $(2,2)$, at least one of the irreducible component decomposes into a sum of characters  when restricted to $Q$ (Lemma 2.2, \cite{DLR}), but then (Lemma 2.1, \cite{DLR}) produces a contraddiction.
\end{proof}


\section{Galois embedding problems}
\subsubsection{Cohomological computation.}
Let $F$ be a totally real number field, $\Sigma_0$ a finite set of places of $F$ disjoint from the set $\Sigma_\infty$ of archimedean places and the set $\Sigma_2$ of places above $2$. For $\Sigma$ the complement of $\Sigma_0$, we let $G_{F,\Sigma}$ denote the Galois group of the maximal Galois extension $F_\Sigma$ of $F$ unramified outside $\Sigma$. We consider $M/F$ a totally real quadratic extension unramified outside $\Sigma$, and for all $r\ge1$ we give $\scr{C}_{2^r}$ the structure of $G_{F,\Sigma}$-module via the homomorphism 
$G_{F,\Sigma}\twoheadrightarrow G(M/F)\hookrightarrow\text{Aut}(\scr{C}_{2^r})$ 
taking the non-trivial element of $G(M/F)$ to the automorphism $x\mapsto x^{-1}$. We denote by $\scr{M}_2=\lim_{r} \scr{C}_{2^r}$ the $G_{F,\Sigma}$-module obtained by taking the direct limit with respect to the natural inclusions $\scr{C}_{2^r}\to \scr{C}_{2^{r+1}}$.
Let $\scr{C}_{2^r}'$ be the dual Galois module $\mrm{Hom}_{\mrm{Gr}}\big(\scr{C}_{2^r}, \cal{O}_\Sigma^\times\big)$, where $\cal{O}_\Sigma$ is the ring of $\Sigma$-integers in $F_\Sigma$. As a $G_M$-module $\scr{C}_{2^r}'$ is isomorphic to the group $\mu_{2^r}$ of $2^r$th-roots of unity with the natural Galois action, hence the field $M_r=M(\mu_{2^r})$ trivializes $\scr{C}_{2^r}'$.

We are interested in analyzing the maps between the various kernels
\[
\Sh^1(G_{F,\Sigma},\scr{C}_{2^r}')=\ker\left(\mrm{H}^1(G_{F,\Sigma}, \scr{C}_{2^r}')\longrightarrow\prod_{v\in\Sigma} \mrm{H}^1(F_v, \scr{C}_{2^r}') \right).
\]

\begin{proposition}\label{pio}
For all $r\ge2$ the map $(j_{r}')_*: \Sh^1(G_{F,\Sigma}, \scr{C}_{2^{r}}')\to \Sh^1(G_{F,\Sigma}, \scr{C}_{2^{r-2}}')$, induced by the dual of the natural inclusion $j_r: \scr{C}_{2^{r-2}}\to \scr{C}_{2^{r}}$, is zero.
\end{proposition}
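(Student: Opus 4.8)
The plan is to work with the dual module $M_r:=\scr{C}_{2^r}'$, which as a $G_{F,\Sigma}$-module is $\mu_{2^r}$ twisted by the quadratic character $\chi_{L/F}$; under this identification the natural inclusion $j_r\colon\scr{C}_{2^{r-2}}\hookrightarrow\scr{C}_{2^r}$ dualizes to the fourth-power map $\mu_{2^r}\to\mu_{2^{r-2}}$, so that $j_r'$ fits into the short exact sequence $0\to M_2\to M_r\xrightarrow{\,j_r'\,}M_{r-2}\to 0$. I would use this alongside the companion sequence $0\to M_{r-2}\xrightarrow{\,\iota\,}M_r\xrightarrow{\,\pi\,}M_2\to 0$ arising from the inclusion of the $2^{r-2}$-torsion submodule, and the elementary identity $\iota\circ j_r'=[4]$, multiplication by $4$ on $M_r$; passing to cohomology this reads $\iota_*\circ(j_r')_*=4\cdot(-)$ on $\mrm{H}^1(G_{F,\Sigma},M_r)$.

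The first step is to reduce the statement to the assertion that $\Sh^1(G_{F,\Sigma},M_r)$ is annihilated by $4$ (the case $r=2$ being vacuous, since $M_0=0$). For $c\in\Sh^1(G_{F,\Sigma},M_r)$, functoriality of the $\Sh$-groups places $(j_r')_*c$ in $\Sh^1(G_{F,\Sigma},M_{r-2})$ and the identity above gives $\iota_*\big((j_r')_*c\big)=4c$; so it is enough to know both that $4c=0$ and that $\iota_*$ is injective on $\Sh^1(G_{F,\Sigma},M_{r-2})$. For the latter: $\ker\big(\iota_*\colon\mrm{H}^1(G_{F,\Sigma},M_{r-2})\to\mrm{H}^1(G_{F,\Sigma},M_r)\big)$ is the image of the connecting map of the companion sequence, hence a quotient of $\mrm{H}^0(G_{F,\Sigma},M_2)=\mu_2$; but the non-trivial element $-1\in M_2$ does not lie in the image of $\pi\colon M_r^{G_{F_v}}=\mu_2\to M_2^{G_{F_v}}$ at an archimedean place $v\in\Sigma$ (this uses $r\ge 3$), so the corresponding candidate class has non-trivial restriction to $\mrm{H}^1(F_v,M_{r-2})$ and thus lies outside $\Sh^1$. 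Hence $\ker(\iota_*)\cap\Sh^1(G_{F,\Sigma},M_{r-2})=0$, and $(j_r')_*c=0$ as soon as $4c=0$.

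It remains to show that $\Sh^1(G_{F,\Sigma},M_r)$ is killed by $4$ for every $r$, which I would prove by induction on $r$. The base case is $\Sh^1(G_{F,\Sigma},M_1)=\Sh^1(G_{F,\Sigma},\bb{Z}/2\bb{Z})=0$: this is exactly where $\delta(\Sigma)>1/2$ enters, since a non-trivial extension of degree $\le 2$ unramified outside $\Sigma$ splits exactly half of the primes, contradicting the density assumption by Chebotarev (the same argument shows moreover that $\Sh^1(G_{F,\Sigma},M_2)$ is killed by $2$). For the inductive step, feeding $\Sh^1(G_{F,\Sigma},M_1)=0$ into the long exact sequence of $0\to M_{r-1}\to M_r\to M_1\to 0$ shows that every $c\in\Sh^1(G_{F,\Sigma},M_r)$ is the image of some $d\in\mrm{H}^1(G_{F,\Sigma},M_{r-1})$, and comparing restrictions at each $v\in\Sigma$ — where the pertinent connecting map originates from $\mrm{H}^0(F_v,M_1)=\bb{Z}/2\bb{Z}$ — forces $2d\in\Sh^1(G_{F,\Sigma},M_{r-1})$. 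The main obstacle is that this bookkeeping, carried out naively, only gives that $c$ is killed by $8$ rather than by $4$; obtaining the uniform bound requires analysing, at the places $v\in\Sigma_2\cup\Sigma_\infty$, how the order-$2$ ambiguity $\ker\big(\mrm{H}^1(G_{F,\Sigma},M_{r-1})\to\mrm{H}^1(G_{F,\Sigma},M_r)\big)$ restricts into the local cohomology, which is controlled by how far up the cyclotomic tower $L(\mu_{2^k})/L$ the place $v$ splits — ultimately by the fact that $L(\mu_4)/L$ has degree exactly $2$. Once the inductive step is closed, the reduction of the second paragraph yields the Proposition.
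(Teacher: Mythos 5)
Your second paragraph is sound and coincides with the structure of the paper's own argument: factor multiplication by $4$ on $\scr{C}_{2^r}'$ as $\iota\circ j_r'$, observe that $(j_r')_*c$ lies in $\Sh^1(G_{F,\Sigma},\scr{C}_{2^{r-2}}')$ and is killed by $\iota_*$, and check at a real archimedean place that the order-$2$ kernel of $\iota_*$ (the image of the connecting map from $\mrm{H}^0(G_{F,\Sigma},\scr{C}_{2^2}')\cong C_2$) meets $\Sh^1(G_{F,\Sigma},\scr{C}_{2^{r-2}}')$ trivially. The genuine gap is in the input to that reduction, namely the claim that $\Sh^1(G_{F,\Sigma},\scr{C}_{2^r}')$ is annihilated by $4$. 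Your induction via $0\to M_{r-1}\to M_r\to M_1\to 0$ produces, for $c\in\Sh^1(M_r)$, a class $d$ with $\iota_*d=c$ and $2d\in\Sh^1(M_{r-1})$; the inductive hypothesis then only kills $8d$, hence $8c$, as you yourself concede. The proposed repair --- ``analysing the order-$2$ ambiguity at $\Sigma_2\cup\Sigma_\infty$, controlled by how far $v$ splits in the cyclotomic tower'' --- is a gesture, not an argument: nothing in it explains how the local analysis would convert $8d=0$ into $4d=0$, and no strengthened inductive statement is formulated that would close the loop. As written, the proof is incomplete at its crucial step.

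The paper avoids induction entirely at this point, and you should adopt its route. Let $L_r=L(\mu_{2^r})$, which trivializes $\scr{C}_{2^r}'$. A Chebotarev argument applied over $L_r$ (not over $F$) shows that $\mrm{H}^1(G_{L_r,\Sigma},\scr{C}_{2^r}')\to\prod_{w\in\Sigma(L_r)}\mrm{H}^1(L_{r,w},\scr{C}_{2^r}')$ is injective: a homomorphism in the kernel cuts out an extension of $L_r$ of degree dividing $2^r$ in which a set of primes of density greater than $1/2$ splits completely, hence the trivial extension. Consequently every locally trivial class is inflated from the finite quotient $G(L_r/F)$, i.e.\ $\Sh^1(G_{F,\Sigma},\scr{C}_{2^r}')=\Sh^1(L_r/F,\scr{C}_{2^r}')\subseteq\mrm{H}^1(L_r/F,\scr{C}_{2^r}')$. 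Inflation--restriction for $F\subset L\subset L_r$ then sandwiches $\mrm{H}^1(L_r/F,\scr{C}_{2^r}')$ between $\mrm{H}^1(L/F,(\scr{C}_{2^r}')^{G(L_r/L)})$ and $\mrm{H}^1(L_r/L,\scr{C}_{2^r}')$, each isomorphic to $\bb{Z}/2\bb{Z}$ (here the total reality of $L$ enters, since $(\scr{C}_{2^r}')^{G_L}=\mu_{2^r}(L)=\mu_2$), so the whole group has exponent dividing $4$ in one step. With this lemma in place, your reduction in the second paragraph completes the proof exactly as in the paper.
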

\begin{proof}
We claim that the restriction $\mrm{H}^1(G_{M_r,\Sigma}, \scr{C}_{2^r}')\to \prod_{w\in\Sigma(M_r)}\mrm{H}^1(M_{r,w}, \scr{C}_{2^r}')$ is injective, where the product is taken over all places of $M_r$ above a place in $\Sigma$. Indeed, if $\phi:G_{M_r}\to \scr{C}_{2^r}'$ is in the kernel of the restriction map, then the field fixed by $\ker\phi$ is a Galois extension of $M_r$ in which the primes that split completely have density $1$. Cebotarev's density theorem implies that such extension is $M_r$ itself. By examining the commutative diagram 
\[\xymatrix{
& & \mrm{H}^1(G_{M_r,\Sigma}, \scr{C}_{2^r}')\ar @{^{(}->}[r]& \underset{w\in \Sigma(M_r)}{\prod}\mrm{H}^1(M_{r,w}, \scr{C}_{2^r}')\\
0\ar[r]& \Sh^1(G_{F,\Sigma},\scr{C}_{2^r}')\ar[r]& \mrm{H}^1(G_{F,\Sigma}, \scr{C}_{2^r}')\ar[r]\ar[u]&  \underset{v\in\Sigma}{\prod}\mrm{H}^1(F_v, \scr{C}_{2^r}')\ar[u]&\\
0\ar[r]& \Sh^1(M_r/F,\scr{C}_{2^r}')\ar[r]\ar@{^{(}->}[u]& \mrm{H}^1(M_r/F, \scr{C}_{2^r}')\ar[r]\ar@{^{(}->}[u]&  \underset{v\in\Sigma}{\prod}\mrm{H}^1(M_{r,w}/F_v, \scr{C}_{2^r}'),\ar@{^{(}->}[u]&
}\] we see that  $\Sh^1(G_{F,\Sigma},\scr{C}_{2^r}')=\Sh^1(M_r/F,\scr{C}_{2^r}')$.

We claim that $\Sh^1(G_{F,\Sigma},\scr{C}_{2^r}')$ is killed by multiplication by $4$. Clearly, it suffices to prove that $\mrm{H}^1(M_r/F, \scr{C}_{2^r}')$ is killed by multiplication by $4$. Considering the inflation-restriction exact sequence
\[\xymatrix{
0\ar[r]&\mrm{H}^1(M/F, (\scr{C}_{2^r}')^{G(M_r/M)})\ar[r]& \mrm{H}^1(M_r/F, \scr{C}_{2^r}')\ar[r]&\mrm{H}^1(M_r/M, \scr{C}_{2^r}').
}
\]
and the fact that both $\mrm{H}^1(M/F, (\scr{C}_{2^r}')^{G(M_r/M)})$ and $\mrm{H}^1(M_r/M, \scr{C}_{2^r}')$ are isomorphic to $\bb{Z}/2\bb{Z}$ (Lemma 9.1.4 and Proposition 9.1.6, \cite{Neu}), the claim follows. 

There is a natural factorization of multiplication by $4$ on $\scr{C}_{2^{r}}'$,
\[\xymatrix{
\scr{C}_{2^{r}}'\ar[rr]^{[4]'}\ar[dr]_{j_r'}& & \scr{C}_{2^{r}}'&\\
& \scr{C}_{2^{r-2}}'\ar[ru]_{(4)'}& &,
}\] which induces the commutative diagram
\[\xymatrix{
\mrm{H}^1(G_{F,\Sigma},\scr{C}_{2^{r}}')\ar[rr]^{[4]'_*}\ar[rd]_{(j_r')_*}& &\mrm{H}^1(G_{F,\Sigma}, \scr{C}_{2^{r}}')&\\
&\mrm{H}^1(G_{F,\Sigma}, \scr{C}_{2^{r-2}}')\ar[ru]_{(4)'_*}& .
}\]
Hence, to complete the proof we need to show that $\Sh^1(G_{F,\Sigma},\scr{C}_{2^{r-2}}')$ does not intersect $\ker(4)'_*$ because it would provide the required inclusion $\Sh^1(G_{F,\Sigma},\scr{C}_{2^{r}}')\subset\ker(j_r')_*$.
The exact sequence of $G_{F,\Sigma}$-modules
\[\xymatrix{
1\ar[r]& \scr{C}_{2^{r-2}}'\ar[r]^{(4)'}& \scr{C}_{2^{r}}'\ar[r]& \scr{C}_{2^2}'\ar[r]& 1,
}\]
induces the exact sequence of cohomology groups
\[\xymatrix{
1\ar[r]&C_2=\mrm{H}^0(G_{F,\Sigma}, \scr{C}_{2^2}')\ar[r]^{\delta}& \mrm{H}^1(G_{F,\Sigma}, \scr{C}_{2^{r-2}}')\ar[r]^{(4)_*'}& \mrm{H}^1(G_{F,\Sigma}, \scr{C}_{2^{r}}')
}\]
because any complex conjugation in $G_{F,\Sigma}$ acts by inversion. Hence, $\delta(\mrm{H}^0(G_{F,\Sigma}, \scr{C}_{2^2}'))=\ker(4)_*'$.  Finally, for every real place $v\in\Sigma_\infty$ the connecting homomorphism $\delta_v:C_2=\mrm{H}^0(\bb{R},\scr{C}_{2^2}')\hookrightarrow\mrm{H}^1(\bb{R},\scr{C}_{2^{r-2}}')$ is injective. In particular, the non-trivial class of $\delta(\mrm{H}^0(G_{F,\Sigma}, \scr{C}_{2^2}'))$ is not locally trivial at the real places.
\end{proof}

\begin{lemma}\label{guan}
Let $v$ be a place of $F$, then the local Galois cohomology group $\mrm{H}^2(F_v, \scr{M}_2)$ is trivial.
\end{lemma}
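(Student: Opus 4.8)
The plan is to reduce the statement, by a passage to the limit and local Tate duality, to the elementary fact that a local field contains only finitely many roots of unity. Recall that $\scr{M}_2=\varinjlim_r\scr{C}_{2^r}$, so it is a $2$-divisible $G_F$-module; concretely $\scr{M}_2\cong\bb{Q}_2/\bb{Z}_2$ with $G(L/F)$ acting by inversion, that is, $\bb{Q}_2/\bb{Z}_2$ twisted by the quadratic character $\chi_{L/F}$ of $G_F$ with fixed field $L$. Since Galois cohomology commutes with filtered colimits, $\mrm{H}^2(F_v,\scr{M}_2)=\varinjlim_r\mrm{H}^2(F_v,\scr{C}_{2^r})$, a colimit of finite groups.

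At an archimedean place the vanishing is immediate: there is nothing to prove if $v$ is complex, and if $v$ is real then $v$ splits in the totally real field $L$, so $G_{F_v}$ acts trivially on $\scr{M}_2$ and $\mrm{H}^2(G_{F_v},\scr{M}_2)=\mrm{H}^2(\bb{Z}/2,\bb{Q}_2/\bb{Z}_2)=(\bb{Q}_2/\bb{Z}_2)\big/2(\bb{Q}_2/\bb{Z}_2)=0$ by $2$-divisibility. For $v$ finite I would use local Tate duality: each $\mrm{H}^2(F_v,\scr{C}_{2^r})$ is the Pontryagin dual of $\mrm{H}^0(F_v,\scr{C}_{2^r}')=(\scr{C}_{2^r}')^{G_{F_v}}$. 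Passing to the colimit and dualizing — Pontryagin duality turns the colimit of these finite groups into the limit of their duals, and $G_{F_v}$-invariants commute with limits — identifies $\mrm{H}^2(F_v,\scr{M}_2)$ with the Pontryagin dual of $\big(\varprojlim_r\scr{C}_{2^r}'\big)^{G_{F_v}}$. Under the isomorphism $\scr{C}_{2^r}'\cong\mu_{2^r}$ of $G_L$-modules, the maps $\scr{C}_{2^{r+1}}'\to\scr{C}_{2^r}'$ dual to the inclusions $\scr{C}_{2^r}\hookrightarrow\scr{C}_{2^{r+1}}$ become the squaring maps $\mu_{2^{r+1}}\to\mu_{2^r}$, so $\varprojlim_r\scr{C}_{2^r}'$ is the $2$-adic Tate module $T_2\mu$, on which $G_F$ acts through $\psi:=\chi_{\mrm{cyc}}\cdot\chi_{L/F}$ with $\chi_{\mrm{cyc}}$ the $2$-adic cyclotomic character.

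It then suffices to check that $\psi$ has nontrivial restriction to $G_{F_v}$ for every finite $v$, since then $\big(T_2\mu\otimes\chi_{L/F}\big)^{G_{F_v}}=\bb{Z}_2(\psi)^{G_{F_v}}=0$ ($\bb{Z}_2$ being a domain). Indeed, if $\psi|_{G_{F_v}}$ were trivial then $\chi_{\mrm{cyc}}|_{G_{F_v}}=\chi_{L/F}|_{G_{F_v}}$ would have order at most $2$; but $\chi_{\mrm{cyc}}|_{G_{F_v}}$ has infinite image, because otherwise $F_v(\mu_{2^\infty})$ would be a finite extension of $F_v$ containing infinitely many roots of unity, which no local field does. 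This handles all places, and $\mrm{H}^2(F_v,\scr{M}_2)=0$.

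The whole argument is essentially bookkeeping with limits and twists; the only point that is not automatic is the place $v\mid 2$, where one cannot argue that the coefficient module has order prime to the residue characteristic — but there $\chi_{\mrm{cyc}}|_{G_{F_v}}$ is wildly ramified, so it still has infinite image and the above comparison of orders applies verbatim. (Alternatively, one can sidestep local duality with a dévissage from $0\to\bb{Z}_2(\chi_{L/F})\to\bb{Q}_2(\chi_{L/F})\to\scr{M}_2\to0$, exploiting $\mrm{cd}_2(G_{F_v})\le 2$ at the finite places.)
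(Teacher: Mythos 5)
Your proof is correct, but it takes a genuinely different route from the paper's. You run everything through local Tate duality: you dualize $\mrm{H}^2(F_v,\scr{C}_{2^r})$ to $\mrm{H}^0(F_v,\scr{C}_{2^r}')$, pass to limits to identify $\mrm{H}^2(F_v,\scr{M}_2)$ with the Pontryagin dual of $\bb{Z}_2(\chi_{\mrm{cyc}}\cdot\chi_{L/F})^{G_{F_v}}$, and kill this by observing that $\chi_{\mrm{cyc}}$ has infinite image on $G_{F_v}$ (a local field has only finitely many roots of unity), so the product character cannot be trivial. The paper avoids duality altogether and argues by cases on the splitting of $v$ in $L/F$: if $v$ splits, $\scr{M}_2$ is a trivial $G_{F_v}$-module and Tate's theorem applies directly; if $v$ is inert or ramified (hence nonarchimedean), $\mrm{cd}(G_{F_v})=2$ forces $\mrm{H}^2(F_v,\scr{M}_2)$ to be $2$-divisible, and multiplication by $2$ factors as $\mrm{cores}\circ\mrm{res}$ through $\mrm{H}^2(L_v,\scr{M}_2)$, which vanishes by the split case. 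Your duality argument has the virtue of treating all finite places uniformly and laying bare the root cause (the local nontriviality of $\chi_{\mrm{cyc}}\cdot\chi_{L/F}$); the paper's argument is more self-contained, trading duality for the corestriction trick and Tate's theorem. One minor remark: your closing parenthetical about $v\mid 2$ is unnecessary reassurance — the ``finitely many roots of unity'' argument already covers $v\mid 2$ verbatim; the passage to local duality is not obstructed at the residue characteristic for finite coefficient modules.
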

\begin{proof}
If $v$ splits in $M/F$ then $G_{F_v}$ acts trivially on $\scr{M}_2$ and we can refer to Tate's Theorem (Theorem 4, \cite{Serre}). If $v$ is inert or ramified (so non-archimedean under our assumptions), then $G_K$ has cohomological dimension $2$ and $\mrm{H}^2(F_v,\scr{M}_2)$ is $2$-divisible. We conclude by noting that multiplication by $2$ factors through $\mrm{H}^2(M_v,\scr{M}_2)$ which is trivial because $\scr{M}_2$ is a trivial $G_{M_v}$-module.
\end{proof}

\begin{theorem}\label{hook}
Let $F$ be a totally real number field, $\Sigma_0$ a finite set of places of $F$ disjoint from the set $\Sigma_\infty$ of archimedean places and the set $\Sigma_2$ of places above $2$. For $\Sigma$ the complement of $\Sigma_0$, we consider $M/F$ a totally real quadratic extension unramified outside $\Sigma$. Then $\mrm{H}^2(G_{F,\Sigma},\scr{M}_2)=0$.
\end{theorem}
\begin{proof}
By Lemma $\ref{guan}$, it suffices to show that the restriction 
$\mrm{H}^2(G_{F,\Sigma}, \scr{M}_2)\to {\bigoplus}_{v\in\Sigma}\mrm{H}^2(F_v, \scr{M}_2)$ is injective.
For every $r\ge2$, consider the exact sequence 
\[\xymatrix{
0\ar[r]& \Sh^2(G_{F,\Sigma}, \scr{C}_{2^r})\ar[r]& \mrm{H}^2(G_{F,\Sigma}, \scr{C}_{2^r})\ar[r]& \underset{v\in\Sigma}{\bigoplus}\mrm{H}^2(F_v, \scr{C}_{2^r}).
}\]
Poitou-Tate duality (Theorem 8.6.7, \cite{Neu}) gives us a commuting diagram
\[\xymatrix{
\Sh^1(G_{F,\Sigma}, \scr{C}_{2^{r}}')\ar[dd]^{j_{r*}'}& \times& \Sh^2(G_{F,\Sigma}, \scr{C}_{2^{r}})\ar[rrd]& & & \\
&& & & \mathbb{Q}/\mathbb{Z}&\\
\Sh^1(G_{F,\Sigma}, \scr{C}_{2^{r-2}}')&\times& \Sh^2(G_{F,\Sigma}, \scr{C}_{2^{r-2}})\ar[uu]_{j_{r*}}\ar[rru]& & & ,
}\]
which in combination with Proposition $\ref{pio}$, shows that $j_{r*}:\Sh^2(G_{F,\Sigma}, \scr{C}_{2^{r-2}})\to \Sh^2(G_{F,\Sigma}, \scr{C}_{2^{r}})$ is zero because the pairings are perfect. 
Finally, direct limits are exact and commute with direct sums, so 
\[\xymatrix{
0=\underset{r,\to}{\lim}\ \Sh^2(G_{F,\Sigma}, \scr{C}_{2^r})\ar[r]&\mrm{H}^2(G_{F,\Sigma}, \scr{M}_2)\ar[r]& \underset{v\in\Sigma}{\bigoplus}\mrm{H}^2(F_v, \scr{M}_2)
}\] 
is exact.
\end{proof}

\subsubsection{Galois embedding problem.}
Let $n\ge 4$, $r\ge 1$ be integers. The symmetric group $S_n$ acts trivially on $\scr{C}_2$, and it is a classical computation that 
\[
\mrm{H}^2(S_n, \scr{C}_2)\cong \bb{Z}/2\bb{Z}\times \bb{Z}/2\bb{Z}\qquad\mrm{and}\qquad\mrm{H}^2(A_n, \scr{C}_2)\cong \bb{Z}/2\bb{Z}.
\]
We consider a class
\[
[\omega]:\qquad  1\to \scr{C}_2\to \Omega\to S_n\to 1
\]
of $\mrm{H}^2(S_n, \scr{C}_2)$ that does not belong to the kernel of the restriction map $\mrm{H}^2(S_n, \scr{C}_2)\to \mrm{H}^2(A_n, \scr{C}_2)$.

Let $F$ be a totally real field. An $S_n$-Galois extension $J/F$, ramified at a finite set $\Sigma_\mrm{ram}$ of places of $F$, determines a surjection $e:G_{F,\Sigma}\twoheadrightarrow S_n$ where $\Sigma$ is the complement of any finite set $\Sigma_0$ of places of $F$ disjoint from $\Sigma_\mrm{ram}\cup \Sigma_\infty\cup\Sigma_2$. We denote by $M=J^{A_n}$ the fixed field by $A_n$. 

\begin{theorem}\label{ginger}
Suppose the quadratic extension $M/F$ cut out by $A_n$ is totally real. For all $[\omega]\in\mrm{H}^2(S_n,\scr{C}_2)$ restricting to the universal central extension of $A_n$ it is possible to embed $J/F$ into a Galois extension $H/F$ unramified outside $\Sigma$, such that the Galois group $G(H/F)$ represents the non-trivial extension $i_{r*}[\omega]$ of $S_n$ by the $S_n$-module  $\scr{C}_{2^r}$ for some $r\gg0$.
\end{theorem}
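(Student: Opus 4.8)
The plan is to reduce the embedding problem to the vanishing of an obstruction class in $\mrm{H}^2(G_{F,\Sigma},\scr{C}_{2^r})$ and then to push the obstruction down the inverse system $\{\scr{C}_{2^r}\}$ until it dies. Concretely, fix the surjection $e:G_{F,\Sigma}\twoheadrightarrow S_n$ cut out by $J/F$. Embedding $J/F$ into a Galois extension $H/F$ with $G(H/F)$ realizing the extension class $i_{r*}[\omega]\in\mrm{H}^2(S_n,\scr{C}_{2^r})$ amounts to lifting $e$ through $\Omega_r\to S_n$, where $[\omega_r]:=i_{r*}[\omega]$ and $1\to\scr{C}_{2^r}\to\Omega_r\to S_n\to 1$; and such a lift (necessarily surjective, since the extension is non-split by Proposition~\ref{tre}-style considerations — here simply because $i_{r*}$ is injective on the relevant piece, which one checks on the $A_n$-part using that $[\omega]$ restricts to the universal central extension) exists in $G_{F,\Sigma}$ precisely when the pullback $e^*[\omega_r]\in\mrm{H}^2(G_{F,\Sigma},\scr{C}_{2^r})$ vanishes. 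So the whole theorem comes down to showing $e^*[\omega_r]=0$ for some $r\gg0$.

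Next I would organize the obstruction classes into a compatible system. The natural inclusions $j_r:\scr{C}_{2^{r-2}}\hookrightarrow\scr{C}_{2^r}$ induce $j_{r*}:\mrm{H}^2(G_{F,\Sigma},\scr{C}_{2^{r-2}})\to\mrm{H}^2(G_{F,\Sigma},\scr{C}_{2^{r}})$, and since $[\omega_r]=i_{r*}[\omega]$ is the image of $[\omega_{r-2}]$ under the corresponding map on $\mrm{H}^2(S_n,-)$, functoriality gives $e^*[\omega_r]=j_{r*}\big(e^*[\omega_{r-2}]\big)$. Thus the classes $e^*[\omega_r]$ for $r$ of a fixed parity form a directed system under the $j_{r*}$, with colimit sitting inside $\varinjlim_r\mrm{H}^2(G_{F,\Sigma},\scr{C}_{2^r})=\mrm{H}^2(G_{F,\Sigma},\scr{M}_2)$ (using that directed colimits are exact — as in the proof of Theorem~\ref{hook}). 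But Theorem~\ref{hook} asserts $\mrm{H}^2(G_{F,\Sigma},\scr{M}_2)=0$. Therefore the image of $e^*[\omega]$ in the colimit is zero, which means there exists $r$ (of the chosen parity, hence $r\gg0$) with $e^*[\omega_r]=j_{r*}\cdots j_{?*}\big(e^*[\omega]\big)=0$. That produces the desired surjection $G_{F,\Sigma}\twoheadrightarrow\Omega_r$, and its fixed field is the sought $H/F$: it is unramified outside $\Sigma$ by construction, contains $J/F$ because the composite $G_{F,\Sigma}\to\Omega_r\to S_n$ is $e$, and $G(H/F)\cong\Omega_r$ realizes $i_{r*}[\omega]$.

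I expect the main obstacle to be bookkeeping around the parity shift by $2$ and checking that the lift one extracts is genuinely \emph{surjective} onto $\Omega_r$ (not merely a homomorphism to it) — i.e.\ that no intermediate quotient $\scr{C}_{2^k}\subsetneq\scr{C}_{2^r}$ can occur. The first point is cosmetic: one runs the colimit argument separately over even and odd $r$, or simply notes $\varinjlim_r$ over $r\equiv r_0\pmod 2$ still computes $\mrm{H}^2(G_{F,\Sigma},\scr{M}_2)$. The second is the real content: surjectivity follows from the hypothesis that $[\omega]$ restricts to the \emph{universal} central extension of $A_n$, which forces $i_{r*}[\omega]$ to remain non-split after pushout along $\scr{C}_2\hookrightarrow\scr{C}_{2^r}$ and, combined with the fact that $G_{F,\Sigma}\twoheadrightarrow S_n$ already has full image, forces the lifted map to hit all of $\scr{C}_{2^r}$; one argues that the image is a subgroup surjecting onto $S_n$ whose intersection with $\scr{C}_{2^r}$ is an $S_n$-submodule, and the only such submodule compatible with non-splitness of the class is the whole of $\scr{C}_{2^r}$. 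A secondary technical point is verifying the functoriality $e^*\circ i_{r*}=j_{r*}\circ e^*$ at the level of cocycles, but this is formal once one fixes a cocycle representative for $[\omega]$ and transports it along the inclusions $\scr{C}_2\hookrightarrow\scr{C}_{2^{r-2}}\hookrightarrow\scr{C}_{2^r}$.
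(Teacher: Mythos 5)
Your proposal follows the same skeleton as the paper's proof: reduce the embedding problem to the vanishing of the obstruction class $e^*i_{r*}[\omega]\in\mrm{H}^2(G_{F,\Sigma},\scr{C}_{2^r})$, organize these classes into a compatible system via $e^*\circ i_{r*}=j_{r*}\circ e^*\circ i_{(r-2)*}$, invoke Theorem~\ref{hook} to conclude the obstruction dies in the directed colimit $\mrm{H}^2(G_{F,\Sigma},\scr{M}_2)=0$ and hence at some finite stage $r\gg0$, and establish the non-triviality of $i_{r*}[\omega]$ by restricting to $A_n$ (where the map $i_{r*}$ is injective since $\mrm{H}^1(A_n,\scr{C}_{2^{r-1}})=0$). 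All of this matches the paper.

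The one place where your argument asserts something the paper does not — and where the argument you give would not survive scrutiny — is the claim that the lift $\gamma:G_{F,\Sigma}\to\Omega_r$ is \emph{necessarily surjective}. Non-splitness of $1\to\scr{C}_{2^r}\to\Omega_r\to S_n\to1$ only rules out $\mrm{Im}(\gamma)\cap\scr{C}_{2^r}=\{1\}$; it does not force this intersection to be all of $\scr{C}_{2^r}$. Indeed, the pushout construction exhibits the double cover $\Omega_1$ as a proper subgroup of $\Omega_r$ that already surjects onto $S_n$ and meets $\scr{C}_{2^r}$ in exactly $\scr{C}_2$, so the statement ``the only such submodule compatible with non-splitness of the class is the whole of $\scr{C}_{2^r}$'' is false: every $\scr{C}_{2^k}$ with $k\ge1$ is compatible, and the image of $\gamma$ can perfectly well land in such a proper subgroup. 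The paper concedes exactly this: its proof says $\gamma$ ``need not be surjective, but still defines a non-trivial extension of $S_n$ by a submodule of $\scr{C}_{2^r}$.'' Fortunately, that weaker conclusion — non-splitness of $G(H/F)\to S_n$ with kernel $\scr{C}_{2^k}$, $k\ge1$ — is all that is used downstream, so your proposal reaches the theorem by the paper's route, but you should drop the surjectivity claim rather than defend it. (Also, the parenthetical appeal to ``Proposition~\ref{tre}-style considerations'' is out of place; that proposition concerns reducibility of tensor inductions and has no bearing on whether a lift of $e$ is onto.)
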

\begin{proof}
	Let $i_r: \scr{C}_2\hookrightarrow \scr{C}_{2^r}$ be the natural inclusion. 
The obstruction to the solution of the Galois embedding problem is encoded in the cohomology class $e^*i_{r*}[\omega]\in \mrm{H}^2(G_{F,\Sigma}, \scr{C}_{2^r})$. Indeed, the triviality of the cohomology class is equivalent to the existence of a continuous homomorphism $\gamma:G_{F,\Sigma}\to \Omega_r$ such that the following diagram commutes
\[\xymatrix{
e^*i_{r*}[\omega]:& 1\ar[r]& \scr{C}_{2^r}\ar@{=}[d]\ar[r]& e^*\Omega_r\ar[r]\ar[d]& G_{F,\Sigma}\ar[r]\ar[d]^e\ar@{.>}_\gamma[dl]& 1\\
i_{r*}[\omega]: &1\ar[r]& \scr{C}_{2^r}\ar[r]& \Omega_r\ar[r]& S_n\ar[r]& 1.
}\]
The homomorphism $\gamma$ need not be surjective, but it still defines a non-trivial extension of $S_n$ by a submodule of $\scr{C}_{2^r}$ as $\Omega_r$ is a non-trivial extension. The non-triviality of the class $i_{r*}[\omega]$ follows by the commutativity of the following diagram
\[\xymatrix{
 \mrm{H}^2(S_n, \scr{C}_2)\ar[r]^{i_{r*}}\ar[d]& \mrm{H}^2(S_n, \scr{C}_{2^r})\ar[d]&\\
 \mrm{H}^2(A_n, \scr{C}_2)\ar[r]^{i_{r*}}& \mrm{H}^2(A_n, \scr{C}_{2^r})&
}\]
because by hypothesis the restriction of $[\omega]$ to $\mrm{H}^2(A_n, \scr{C}_2)$ is non-zero and the lower orizontal arrow is injective as $\mrm{H}^1(A_n, \scr{C}_{2^{r-1}})=0$ for $n\ge4$. Finally, the obstruction to the solution of the Galois embedding problem vanishes for $r\gg0$ because $\underset{r,\to}{\lim}\ \mrm{H}^2(G_{F,\Sigma},\scr{C}_{2^r} )=\mrm{H}^2(G_{F,\Sigma},\scr{M}_2)=0$ by Theorem $\ref{hook}$.
\end{proof}


\section{On Artin representations}
Let $K/F$ be an $S_5$-quintic extension ramified at a finite set $\Sigma_\mrm{ram}$ of places of $F$. Suppose the Galois closure $J$ is totally complex and that the subfield of $J$ fixed by $A_5$ is a totally real quadratic extension $M/F$. Let $\Sigma$ be the complement of a finite set $\Sigma_0$ disjoint from $\Sigma_\mrm{ram}\cup \Sigma_\infty\cup\Sigma_2$.

The simple group $A_5$ does not afford an irreducible $2$-dimensional representation. However, there are two conjugacy classes of embeddings of $A_5$ into $\mrm{PGL}_2(\bb{C})$. We fix one such embedding and we consider the projective representation $G_{M,\Sigma}\twoheadrightarrow G(J/M)\cong A_5\subset \text{PGL}_2(\bb{C})$. We are interested in finding a lift with specific properties.
Consider the double cover $\Omega_5^+$ of $S_5$ where transpositions lift to involutions, and that restricts to the universal central extension of $A_5$ . By Theorem $\ref{ginger}$ there exists a positive integer $r$ and a Galois extension $H/F$, unramified outside $\Sigma$ and containing $J/F$, such that the sequence 
\[\xymatrix{
1\ar[r]& \scr{C}_{2^r}\ar[r]& G(H/F)\ar[r]& G(J/F)\ar[r]& 1
}\]
is exact. Note that transpositions of $ S_5\cong G(J/F)$ lift to element of order 2 of $G(H/F)$ and their conjugation action on $\scr{C}_{2^r}$ take every element to its inverse. Let $\tilde{A}_5$ denote the universal central extension of $A_5$. The representations of the group
\[
G(H/M)\cong (C_{2^{r}}\times \tilde{A}_5)/\langle-1,-1\rangle
\]
are constructed by tensoring a character of $C_{2^r}$ with a $2$-dimensional representation of $\tilde{A}_5$ that takes the same value at $-1$. We consider $\varrho_K: G_{M,\Sigma}\to \mrm{GL}_2(\mathbb{C})$, a representation obtained by composing the quotient map $G_{M,\Sigma}\twoheadrightarrow G(H/M)$ with any irreducible $2$-dimensional representation of $G(H/M)$. 

\begin{remark}\label{dihedral}
Note that since the abelianization of $\tilde{A}_5$ is trivial, there is a dihedral Galois extension $D/F$ such that $\det(\varrho_K)$ factors through the quotient by the subgroup $G(H/D)\cong(C_{2}\times \tilde{A}_5)/\langle-1,-1\rangle$. Therefore, the composition of the determinant with the transfer map, $\det(\varrho_K)\circ V:G_F\longrightarrow\bb{C}^\times$, is the trivial character.
\end{remark}

\begin{proposition}\label{characterization}
The tensor induction $\otimes\mbox{-}\mrm{Ind}_M^F(\varrho_K):G_F\longrightarrow\mrm{GL}_4(\bb{C})$ factors through $G_J$ and induces a faithful representation $\otimes\mbox{-}\mrm{Ind}_M^F(\varrho_K):S_5\longrightarrow\mrm{GL}_4(\bb{C})$ isomorphic to the standard representation of $S_5$ on $5$ letters.
\end{proposition}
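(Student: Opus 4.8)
Here is the plan of proof. The argument has three parts: first show that $\otimes\mbox{-}\mrm{Ind}_L^F(\varrho_K)$ is trivial on $G_J$, so that it descends to $G(J/F)\cong S_5$; then prove that the resulting $4$-dimensional representation of $S_5$ is irreducible; and finally identify it with the standard representation by evaluating its character on a transposition.

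\emph{Descent to $S_5$.} The conjugation action of $G(H/F)$ on its abelian normal subgroup $\scr{C}_{2^r}\cong G(H/J)$ factors through $S_5$, and $A_5$ acts trivially, so $G(H/J)$ is central in $G(H/L)\cong(C_{2^r}\times\tilde{A}_5)/\langle-1,-1\rangle$. Since $\varrho_K$ is irreducible, Schur's lemma yields a character $\chi$ with $\varrho_K(g)=\chi(g)\,\mathrm{Id}$ for every $g\in G_J$. Fix a coset representative $\theta\in G_F\setminus G_L$ of $G(L/F)$; its image in $S_5$ is odd, so conjugation by $\theta$ inverts $G(H/J)$, whence $\varrho_K(\theta g\theta^{-1})=\chi(g)^{-1}\,\mathrm{Id}$ for $g\in G_J$. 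Feeding this into the explicit index-$2$ tensor induction formula of the Example above, for $g\in G_J\subseteq G_L$ one gets $\otimes\mbox{-}\mrm{Ind}_L^F(\varrho_K)(g)=\varrho_K(g)\otimes\varrho_K(\theta g\theta^{-1})=\chi(g)\chi(g)^{-1}\,\mathrm{Id}=\mathrm{Id}$. Hence $\otimes\mbox{-}\mrm{Ind}_L^F(\varrho_K)$ is trivial on $G_J$ and factors through $G_F/G_J=G(J/F)\cong S_5$.

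\emph{Irreducibility.} The irreducible $2$-dimensional representations of $G(H/L)$ have the form $\chi_0\otimes\sigma$ with $\sigma$ one of the two faithful $2$-dimensional representations of $\tilde{A}_5$, so $\varrho_K$ has projective image $\tilde{A}_5/\{\pm1\}\cong A_5$. Thus Proposition~\ref{tre} applies with $G=G_F$ and $Q=G_L$: if $\otimes\mbox{-}\mrm{Ind}_L^F(\varrho_K)$ were reducible it would have decomposition type $(3,1)$, i.e.\ contain an irreducible $3$-dimensional constituent; but since it factors through $S_5$ this would produce an irreducible $3$-dimensional representation of $S_5$, which does not exist (the irreducible degrees of $S_5$ are $1,1,4,4,5,5,6$). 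Therefore $\otimes\mbox{-}\mrm{Ind}_L^F(\varrho_K)$ is irreducible, hence automatically faithful as a representation of $S_5$.

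\emph{Identification.} An irreducible $4$-dimensional representation of $S_5$ is either the standard representation or its twist by the sign character, and these are distinguished by their trace on a transposition, $2$ versus $-2$. By construction every transposition $\tau\in S_5\cong G(J/F)$ lifts to an element of order $2$ in $G(H/F)$, hence to an element $g\in G_F\setminus G_L$ with $g^2\in G_H$, so $\varrho_K(g^2)=\mathrm{Id}$. Using the $g\notin G_L$ branch of the index-$2$ formula together with the elementary identity $\mathrm{tr}\big((A\otimes B)\circ\psi(12)\big)=\mathrm{tr}(AB)$, the character of $\otimes\mbox{-}\mrm{Ind}_L^F(\varrho_K)$ at $\tau$ equals $\mathrm{tr}\big(\varrho_K(g\theta^{-1})\varrho_K(\theta g)\big)=\mathrm{tr}\big(\varrho_K(g^2)\big)=2$. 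Hence $\otimes\mbox{-}\mrm{Ind}_L^F(\varrho_K)$ is isomorphic to the standard representation of $S_5$, which is the assertion. The two load-bearing inputs are the centrality of $\scr{C}_{2^r}$ in $G(H/L)$ (which makes $\varrho_K|_{G_J}$ scalar and drives the descent) and the use of Proposition~\ref{tre}, where one must exploit that the $(3,1)$ alternative means a genuinely \emph{irreducible} $3$-dimensional summand so that the representation theory of $S_5$ excludes it; the bookkeeping with coset representatives, the two branches of the index-$2$ formula, and the trace identity are routine.
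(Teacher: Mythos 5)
Your proof is correct and follows essentially the same route as the paper: descend via the centrality/inversion of $\scr{C}_{2^r}\cong G(H/J)$ in the index-$2$ tensor-induction formula, apply Proposition~\ref{tre} together with the absence of $3$-dimensional irreducibles of $S_5$, and distinguish the standard from the sign-twisted $4$-dimensional representation by the trace at a transposition. The only small variation is that the paper computes $\ker\big(\otimes\mbox{-}\mrm{Ind}_L^F(\varrho_K)\big)\cap G_L=G_J$ exactly (giving descent and faithfulness in one stroke), whereas you show the inclusion $G_J\subseteq\ker$ and recover faithfulness afterward from irreducibility — both routes are sound, and your trace identity $\mathrm{tr}\big((A\otimes B)\circ\psi(12)\big)=\mathrm{tr}(AB)$ replaces the paper's explicit $4\times 4$ matrix to the same effect.
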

\begin{proof}
By construction, the action by conjugation of $G(J/F)$ on $G(H/J)$ factors through $G(M/F)$ and sends every element to its inverse. Let $\theta\in G_F$ be an element mapping to a transposition in $G(J/F)\cong S_5$, then
\[\begin{split}
\ker\left(\otimes\mbox{-}\mrm{Ind}_M^F(\varrho_K)\right)\cap G_M& =\ker\left(\varrho_K\otimes(\varrho_K)^\theta\right)\\
&=\{h\in G_M\lvert\ \exists \alpha\in\bb{C}^\times\ \mrm{with}\ \varrho_K(h)=\alpha\bb{I}_2,\ \varrho_K^\theta(h)=\alpha^{-1}\bb{I}_2\}\\
&=G_{J}.
\end{split}
\]
Thus, $\otimes\mbox{-}\mrm{Ind}_M^F(\varrho_K)$ induces a $4$-dimensional representation $\otimes\mbox{-}\mrm{Ind}_M^F(\varrho_K):S_5\to \mrm{GL}_4(\bb{C})$ of $S_5$.
By Proposition $\ref{tre}$, $\otimes\mbox{-}\mrm{Ind}_M^F(\varrho_K)$ has either decomposition type $(3,1)$ or it is irreducible. Hence, it has to be irreducible since $S_5$ does not have irreducible representations of dimension $3$. Finally, $S_5$ has only two irreducible $4$-dimensional representations: the standard representation $\mrm{St}_{S_5}$ on $5$ letters and its twist by the sign character $\mrm{sign}:S_5\to \{\pm1\}$. We can distinguish between them by computing the trace of transpositions. Recall that our input was the central extension $\Omega_5^+$ of $S_5$ with the property that transpositions of $S_5$ lift to involutions. It follows that  $\theta^2\in G_H$ and $\varrho_K(\theta^2)=\bb{I}_2$, hence we can compute that
\[
\otimes\mbox{-}\mrm{Ind}_M^F(\varrho_K)(\theta)=\begin{pmatrix}
1 &0&0&0\\
0&0&1&0\\
0&1&0&0\\
0&0&0&1\\
\end{pmatrix}
\]
has trace equal to $2$.
\end{proof}

\begin{corollary}\label{exists}
Let $K/F$ be an $S_5$-quintic extension whose Galois closure $J$ is totally complex and contains a totally real quadratic extension $M/F$. Let $\Sigma$ be the complement of any finite set $\Sigma_0$ of places of $F$ disjoint from $\Sigma_\mrm{ram}\cup \Sigma_\infty\cup\Sigma_2$, then there exists a totally odd $2$-dimensional Artin representation $\varrho_K: G_{M,\Sigma}\to \text{GL}_2(\bb{C})$ such that $\otimes\mbox{-}\mrm{Ind}_M^F(\varrho_K)$ is equivalent to $\mrm{Ind}_K^F\bb{I}-\bb{I}$.
\end{corollary}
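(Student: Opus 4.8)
The plan is to assemble Corollary \ref{exists} from the machinery built in the previous sections, checking the three desiderata---that $\varrho_K$ is totally odd, has controlled ramification, and has the correct tensor induction---one at a time. First I would set up the data: since $J$ is totally complex while $L/F$ is totally real, the quadratic extension cut out by $A_5$ inside $J$ is totally real, so Theorem \ref{ginger} applies with $n=5$ and $[\omega]$ the class of the double cover $\Omega_5^+$ in which transpositions lift to involutions (this class restricts to the universal central extension $\tilde A_5$, as required). Theorem \ref{ginger} then furnishes an integer $r$ and a Galois extension $H/F$, unramified outside $\Sigma$ and containing $J$, with $G(H/F)$ realizing the extension $i_{r*}[\omega]$ of $S_5$ by $\scr C_{2^r}$; in particular transpositions of $S_5$ lift to involutions in $G(H/F)$ and conjugation acts on $\scr C_{2^r}$ by inversion. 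I would then take $\varrho_K:G_{L,\Sigma}\twoheadrightarrow G(H/L)\cong(C_{2^r}\times\tilde A_5)/\langle-1,-1\rangle$ composed with an irreducible $2$-dimensional representation of $G(H/L)$, exactly as constructed in the paragraph preceding Remark \ref{dihedral}. Since $H/F$ is unramified outside $\Sigma$, so is $\varrho_K$; this handles controlled ramification automatically.

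Next I would verify total oddness. A complex conjugation $c\in G_F$ lies outside $G_L$ (as $L/F$ is totally real, wait---$c$ lies in $G_L$ precisely because the real places of $F$ split in $L$), so $c$ acts on $G(H/L)$, and on the $\scr C_{2^r}$-part by inversion; meanwhile $c$ has order $2$ in $G(H/F)$ because it maps to a transposition-type element... more carefully: since $J$ is totally complex, complex conjugation at a real place of $F$ acts nontrivially on $J$, so it maps to an element of $S_5$ of order $2$, and by our choice of $\Omega_5^+$ it lifts to an involution in $G(H/F)$. One then computes, as in the proof of Proposition \ref{characterization}, that $\varrho_K(c)$ has trace $0$, i.e.\ eigenvalues $+1,-1$, which is precisely total oddness of the $2$-dimensional representation $\varrho_K$ of $G_{L,\Sigma}$ (here I should check that complex conjugations of $L$ relevant to total oddness arise this way, using that $L$ is totally real so every real place of $L$ gives such a $c$).

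The third and most substantial point is the identification $\otimes\mbox{-}\mrm{Ind}_L^F(\varrho_K)\cong\mrm{Ind}_K^F\bb I-\bb I$ as virtual representations. By Proposition \ref{characterization}, $\otimes\mbox{-}\mrm{Ind}_L^F(\varrho_K)$ factors through $G_J$ and is the standard $4$-dimensional representation $\mrm{St}_{S_5}$ of $S_5$. On the other hand $\mrm{Ind}_K^F\bb I=\mrm{Ind}_{S_4}^{S_5}\bb I$ is the permutation representation of $S_5$ on $5$ letters, which decomposes as $\mrm{St}_{S_5}\oplus\bb I$; hence $\mrm{Ind}_K^F\bb I-\bb I=\mrm{St}_{S_5}=\otimes\mbox{-}\mrm{Ind}_L^F(\varrho_K)$ as representations of $G_F$ (both factoring through $G(J/F)\cong S_5$). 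So the equivalence is immediate once Proposition \ref{characterization} is in hand.

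The main obstacle is nothing new at the level of this corollary---it is entirely a consequence of Theorem \ref{ginger} and Proposition \ref{characterization}---so the real content has already been discharged; the only genuine checks left are bookkeeping ones: confirming that the hypotheses of Theorem \ref{ginger} are met (totally real $L/F$, which follows from $J$ being totally complex together with $L/F$ totally real), and that the complex-conjugation computation giving total oddness is exactly the transposition-trace computation $\mathrm{tr}\,\varrho_K(c)=0$ already performed in Proposition \ref{characterization}. I would present the proof as: invoke Theorem \ref{ginger} to get $H/F$ and hence $\varrho_K$; note it is unramified outside $\Sigma$; apply Proposition \ref{characterization} for the tensor induction and rewrite $\mrm{St}_{S_5}=\mrm{Ind}_K^F\bb I-\bb I$; and finish by the complex-conjugation trace computation for total oddness.
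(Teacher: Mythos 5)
Your proposal is correct and follows the same route as the paper: reduce to Proposition \ref{characterization} for the identification of the tensor induction with $\mrm{St}_{S_5}=\mrm{Ind}_K^F\bb{I}-\bb{I}$, and use the totally-complex hypothesis on $J$ to obtain total oddness of $\varrho_K$.

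One inaccuracy worth straightening out in the total-oddness step. Complex conjugation $c$ at a real place of $L$ lies in $G_L$ (you correct yourself mid-sentence on this) and therefore maps into $A_5\cong G(J/L)$, \emph{not} to a transposition of $S_5$; hence the defining property of $\Omega_5^+$ (that transpositions lift to involutions) is not what makes $\varrho_K(c)$ an involution. Likewise the trace computation in Proposition \ref{characterization} is for the $4$-dimensional $\otimes\mbox{-}\mrm{Ind}_L^F(\varrho_K)$ evaluated at a transposition $\theta\notin G_L$, where the answer is $2$, not $0$; it is a different calculation. The correct and much simpler chain is: $c^2=1$ in $G_{L,\Sigma}$, so $\varrho_K(c)^2=\bb{I}_2$; and since $J$ is totally complex, $c$ maps to a non-trivial element of $G(J/L)$ and the faithfulness of the projective representation $G(J/L)\hookrightarrow\mrm{PGL}_2(\bb{C})$ forces $\varrho_K(c)$ to be non-scalar; hence $\varrho_K(c)\sim\mrm{diag}(1,-1)$ and $\det\varrho_K(c)=-1$. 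This is exactly the content of the paper's one-line justification, and neither $\Omega_5^+$ nor the Proposition \ref{characterization} computation plays a role here.
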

\begin{proof}
Thanks to Proposition $\ref{characterization}$, we only have to check that the representation $\varrho_K: G_{M,\Sigma}\to \text{GL}_2(\bb{C})$ considered there is totally odd. By assumption the Galois closure $J$ is totally complex, thus the projectivization of $\varrho_K$ is a faithful representation of $G(J/M)$, which contains every complex conjugation of $M$.
\end{proof}


\section{Growth of the analytic rank}
Let $M/F$ be a quadratic extension of totally real fields, $E_{/F}$ a modular elliptic curve of conductor $\frak{N}$, and $g$ a primitive Hilbert cuspform over $M$ of parallel weight one and level $\frak{Q}$. Attached to this data, there is a unitary cuspidal automorphic representation $\Pi=\Pi_{g,E}$ of the algebraic group $\mbf{G}=\text{Res}_{M\times F/F}(\text{GL}_{2,M\times F})$. Let $\phi:G_F\to S_3$ be the homomorphism mapping the absolute Galois group of $F$ to the symmetric group over $3$ elements associated with the \'etale cubic algebra $(M\times F)/F$. The $L$-group $^L\mbf{G}$ is given by the semi-direct product $\text{GL}_2(\bb{C})^{\times3}\rtimes_\phi G_F$ where $G_F$ acts on the first factor through $\phi$. 

\begin{definition}
 The twisted triple product $L$-function associated with the unitary autormophic representation $\Pi$ is given by the Euler product
\[
L(s,\Pi,\mrm{r})=\prod_v L_v(s,\Pi_v,\mrm{r})^{-1}
\]
where $\Pi_v$ is the local representation at the place $v$ of $F$ appearing in the restricted tensor product decomposition $\Pi=\bigotimes_v'\Pi_v$, and the representation $\mrm{r}$ gives the action of $^L\mbf{G}$ on $\bb{C}^2\otimes\bb{C}^2\otimes\bb{C}^2$ which restricts to the natural $8$-dimensional representation of $\text{GL}_2(\bb{C})^{\times3}$ and for which $G_F$ acts via $\phi$ permuting the vectors.
\end{definition}
Assume the central character $\omega_\Pi$ of $\Pi$ is trivial when restricted to $\bb{A}_F^\times$, then the complex $L$-function $L(s,\Pi,\mrm{r})$ has meromorphic continuation to $\bb{C}$ with possible poles at $0,\frac{1}{4},\frac{3}{4},1$ and functional equation $L(s,\Pi,\mrm{r})=\epsilon(s,\Pi,\mrm{r})L(1-s,\Pi,\mrm{r})$ (\cite{PS-R}, Theorems 5.1, 5.2, 5.3). When all the primes dividing $\frak{N}$ are unramified in $M/F$ and $(\frak{N}, \mrm{N}_{M/F}(\frak{Q}))=1$, the sign of the functional equation can be computed as follows (Theorems $\mrm{B},\mrm{D}$, Remark 4.1.1, \cite{epsilonprasad}). Write $\frak{N}=\frak{N}^+\frak{N}^-$, where $\frak{N}^-$ is the square-free part of $\frak{N}$, and suppose that all prime factors of $\frak{N}^+$ are split in $M/F$, then the sign of the functional equation is determined by the number of prime divisors of $\frak{N}^-$ which are inert in $M/F$:
\[
\epsilon\Big(\frac{1}{2},\Pi,\mrm{r}\Big)=\left(\frac{M/F}{\frak{N}^-}\right).
\]

\begin{theorem}\label{parity}
Let $E_{/F}$ be a modular elliptic curve of odd conductor $\frak{N}$ and let $K/F$ be an $S_5$-quintic extension with totally complex Galois closure $J$. Suppose $J$ is  unramified at $\frak{N}$ and contains a totally real quadratic extension $M/F$, then the ratio of $L$-functions $L(E/K,s)/L(E/F,s)$ has meromorphic continuation to the whole complex plane and it is holomorphic at $s=1$. Furthermore, if all prime factors of $\frak{N}^+$ are split in $M/F$, then 
	\[
	 \mrm{ord}_{s=1}\frac{L(E/K,s)}{L(E/F,s)}\equiv1 \pmod{2}\qquad \iff\qquad\left(\frac{M/F}{\frak{N}^-}\right)=-1.
	 \]
\end{theorem}
\begin{proof}
  Thanks to Corollary $\ref{exists}$ and the modularity of totally odd Artin representations of the absolute Galois group of totally real fields (Theorem 0.3, \cite{PS}), there is a primitive Hilbert cuspform $g$ of parallel weight one over $M$, and level $\frak{Q}$ prime to $\frak{N}$, such that $\varrho_g=\varrho_K$.  A direct inspection of the Euler product of the twisted triple product $L$-function $L(s,\Pi,\mrm{r})$ attached to $\Pi=\Pi_{g,E}$ produces the equality of incomplete $L$-functions
  \[
  L_S(s,\Pi,\mrm{r})=L_S\Big(E,\otimes\mbox{-}\mrm{Ind}_M^F(\varrho_g),s+\frac{1}{2}\Big)=\frac{L_S(E/K,s+\frac{1}{2})}{L_S(E/F,s+\frac{1}{2})},
  \]
   where $S$ is any finite set containing the primes dividing $\frak{N}\cdot\mrm{N}_{M/F}(\frak{Q})$ and the primes that ramify in $M/F$. Remark $\ref{dihedral}$ ensures the triviality of the central character $\omega_\Pi$ when restricted to $\bb{A}^\times_F$, hence, meromorphic continuation, holomorphicity at the center and the criterion for the parity of the order of vanishing at the center of $L(E/K,s)/L(E/F,s)$ follow.
\end{proof}

\begin{corollary}
	Let $E_{/F}$ be an elliptic curve of odd conductor $\frak{N}$ and at least one prime of multiplicative reduction. We denote by $G_5(E_{/F};X)$ the number of quintic extensions $K$ of $F$ such that the norm of the relative discriminant is at most $X$ and the analytic rank of $E$ grows over $K$, i.e., $r_\mrm{an}(E/K)>r_\mrm{an}(E/F)$. Then 	
$G_5(E;X)\asymp_{+\infty} X$.
\end{corollary}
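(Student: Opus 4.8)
First I would dispose of the upper bound, which is essentially free: $G_5(E;X)$ is at most the number of all quintic fields of absolute discriminant at most $X$, which is $(c_5+o(1))X$ by Bhargava \cite{Bar5}. So the entire content lies in the lower bound $G_5(E;X)\gg X$, and the plan is to exhibit a positive proportion of $S_5$-quintic fields $K$ to which Theorem \ref{parity} applies and for which $\left(\frac{L/\bb{Q}}{N^-}\right)=-1$. For any such $K$, Theorem \ref{parity} gives that $R_K(s):=L(E/K,s)/L(E/\bb{Q},s)$ is holomorphic at $s=1$ and vanishes there to odd — hence $\ge 1$ — order; since $L(E/\bb{Q},s)$ is entire with $\mrm{ord}_{s=1}L(E/\bb{Q},s)=\mrm{r}_\mrm{an}(E/\bb{Q})$ by modularity of $E$, and $L(E/K,s)=L(E/\bb{Q},s)\,R_K(s)$, we obtain $\mrm{r}_\mrm{an}(E/K)=\mrm{r}_\mrm{an}(E/\bb{Q})+\mrm{ord}_{s=1}R_K(s)>\mrm{r}_\mrm{an}(E/\bb{Q})$, so $K$ is counted by $G_5(E;X)$.

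Next I would turn the constraints on $K$ into local conditions at the archimedean place and at the primes dividing $N$. The field $L=J^{A_5}$ is cut out by the sign character $S_5\to\{\pm1\}$, so for a prime $p$ unramified in $K$ the prime $p$ splits in $L/\bb{Q}$ exactly when its factorization type in $K$ is an even cycle type of $S_5$ (that is, $1{+}1{+}1{+}1{+}1$, $2{+}2{+}1$, $3{+}1{+}1$, or $5$), and is inert otherwise. Fixing a prime $q_0\mid N^-$ of multiplicative reduction of $E$ — so $v_{q_0}(N)=1$, whence $q_0$ divides $N^-$ exactly and does not divide $N^+$ — I would impose on $K$: a single real place (signature $(1,2)$); the unramified factorization type $2{+}1{+}1{+}1$ at $q_0$; and the split type $1{+}1{+}1{+}1{+}1$ at every other rational prime dividing $N$. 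Each of these is a non-empty union of isomorphism classes of \'etale $\bb{Q}_v$-algebras of positive local mass. By construction $q_0$ is inert in $L$ and every other prime dividing $N$ — in particular every prime dividing $N^+$ — splits in $L$; hence $K$ is unramified at all primes dividing $N$ and $\left(\frac{L/\bb{Q}}{N^-}\right)=\left(\frac{L/\bb{Q}}{q_0}\right)\prod_{p\mid N^-,\,p\ne q_0}\left(\frac{L/\bb{Q}}{p}\right)^{v_p(N^-)}=(-1)\cdot 1=-1$.

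Then I would invoke Bhargava's theorem in the form counting quintic fields subject to prescribed splitting behaviour at a finite set of places \cite{Bar5}: the number of quintic fields of absolute discriminant at most $X$ meeting the three local conditions above is $\sim cX$ for a constant $c>0$, and a proportion $1$ of them have Galois group $S_5$, the non-$S_5$ quintic fields contributing only $o(X)$. For such an $S_5$-field $K$, a single real place forces the Galois closure $J$ to be totally complex (it is Galois over $\bb{Q}$ and has a complex place) and $L$ to be totally real (complex conjugation acts on the five embeddings of $K$ as a product of two transpositions, hence lies in $A_5=G(J/L)$); thus all hypotheses of Theorem \ref{parity} hold, and since $\left(\frac{L/\bb{Q}}{N^-}\right)=-1$ we conclude $\mrm{r}_\mrm{an}(E/K)>\mrm{r}_\mrm{an}(E/\bb{Q})$ as in the first paragraph. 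Hence $G_5(E;X)\ge (c+o(1))X$, and combined with the upper bound, $G_5(E;X)\asymp_{+\infty}X$.

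Since the analytic core — the meromorphic continuation of $L(E/K,s)/L(E/\bb{Q},s)$ and the parity law — is already supplied by Theorem \ref{parity}, the main obstacle will be the interface with Bhargava's statistics: one must use the version of \cite{Bar5} permitting simultaneous local specifications at $\infty$ and at the bad primes, verify that the three specifications above carry positive local mass, and note that passing to $S_5$-fields changes nothing (and in particular that the leading constant stays positive). The remaining ingredient — reading ``$p$ split in $L$'' off the factorization type of $p$ in $K$ via the sign character $S_5\to\{\pm1\}$ — is routine bookkeeping, but it is precisely what joins the analytic and the statistical halves of the argument.
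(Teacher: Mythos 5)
Your argument is correct and follows essentially the same route as the paper: Theorem \ref{parity} reduces the question to exhibiting a positive proportion of $S_5$-quintic fields with prescribed local behaviour (signature $(1,2)$ at infinity, an odd cycle type at one chosen prime $q_0\|N$, and split completely at the other primes of $N$), and Bhargava's Theorems 1 and 3 in \cite{Bar5} supply the upper and lower bounds respectively. The paper states this in two sentences; you have merely made explicit the choice of local conditions, the verification that a single real embedding places complex conjugation in $A_5$ so $L$ is real and $J$ complex, and the computation $\left(\frac{L/\bb{Q}}{N^-}\right)=-1$ — all of which are implicit in the paper's phrase ``certain splitting behaviour.''
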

\begin{proof}
	By Theorem $\ref{parity}$, $G_5(E_{/F};X)$ contains all $S_5$-quintic extension $K/F$  with totally complex Galois closure $J$ containing a totally real quadratic extension  in which the prime divisors of $\frak{N}$ are unramified and have certain splitting behaviour. Then (\cite{BSW}, Theorem $2$) gives $G_5(E_{/F};X)\gg_{+\infty} X$, while (\cite{BSW}, Theorem $1$) provides $X\gg_{+\infty} G_5(E_{/F};X)$.
\end{proof}


\bibliography{p3L_GZ}
\bibliographystyle{alpha}

\end{document}